\newtheorem{thm}{Theorem}[section]
\newtheorem{lem}[thm]{Lemma}
\numberwithin{equation}{section}
\begin{document}

\title{A Solution to the Edge-Balanced Index Set Problem\\ for Complete Odd Bipartite Graphs\thanks{This paper has been accepted for publication in the Bulletin of the Institute of Combinatorics and Its Applications.}} 

\author{
Elliot Krop\thanks{Department of Mathematics, Clayton State University, (\texttt{ElliotKrop@clayton.edu})}
\and
Sarah Minion\thanks{Department of Mathematics, Clayton State University, (\texttt{sminion@student.clayton.edu})}
\and 
Pritul Patel\thanks{Department of Mathematics, Clayton State University, (\texttt{ppatel15@student.clayton.edu})}
\and
Christopher Raridan\thanks{Department of Mathematics, Clayton State University, (\texttt{ChristopherRaridan@clayton.edu})}}
   
\maketitle

% ABSTRACT
\begin{abstract}
In 2009, Kong, Wang, and Lee began work on the problem of finding the edge-balanced index sets of complete bipartite graphs $K_{m,n}$ by solving the cases where $n=1$, $2$, $3$, $4$, and $5$, and also the case where $m=n$. In 2011, Krop and Sikes expanded upon that work by finding $EBI(K_{m,m-2a})$ for odd $m>5$ and $1 \leq a \leq \frac{m-3}{4}$. In this paper, we provide a general solution to the edge-balanced index set problem for all complete odd bipartite graphs, thereby concluding the problem for this case. 
\\[\baselineskip] 
	2000 Mathematics Subject Classification: 05C78, 05C25 
\\[\baselineskip]
  Keywords: Complete bipartite graph, edge-labeling, vertex-labeling, edge-friendly labeling, edge-balanced index set   
\end{abstract}

%%%%%%%%%%%%%%%%%%%%%%%%%%%%%%%%%%%%%%%%%%%%%%%%
\section{Introduction}

%%%%%%%%%%%%%%%%%%%%%%%%%%%%%%%%%%%%%%%%%%%%%%%%
\subsection{Definitions}

For a graph $G=(V,E)$ with vertex set $V$ and edge set $E$, a \textit{binary edge-labeling} is an injective function $f : E \to \{ 0,1 \}$. An edge labeled $1$ will be called a \textit{$1$-edge} and an edge labeled $0$ will be called a \textit{$0$-edge}. Let $e(1)$ and $e(0)$ represent the number of edges labeled $1$ and $0$, respectively. A binary edge-labeling is \textit{edge-friendly} if $|e(1)-e(0)| \leq 1$. Call the number of $1$-edges incident with a vertex $v$ the \textit{$1$-degree} of $v$, denoted $\deg_1(v)$, and the number of $0$-edges incident with $v$ the \textit{$0$-degree}, denoted $\deg_0(v)$, and note that the degree of $v$ is $\deg(v) = \deg_1(v) + \deg_0(v)$. An edge-friendly labeling of $G$ will induce a (possibly partial) \textit{vertex-labeling} where a vertex $v$ will be labeled $1$ when $\deg_1(v) > \deg_0(v)$, labeled $0$ when $\deg_0(v) > \deg_1(v)$, and will be unlabeled when $\deg_1(v) = \deg_0(v)$. A vertex labeled $1$ will be called a \textit{$1$-vertex} and a vertex labeled $0$ will be called a \textit{$0$-vertex}. Let $v(1)$ and $v(0)$ represent the number of $1$-vertices and $0$-vertices, respectively. The \textit{edge-balanced index set} of $G$ is defined as
\begin{align*}
EBI(G) = \big\{ |v(1)-v(0)|: \text{over all edge-friendly labelings of $G$} \big\}.
\end{align*} 
A reader interested in the study of graph labelings may find Gallian's dynamic survey~\cite{GallianYYYY} helpful as an introduction to graph labeling problems.

Let $K_{m,n}$ be a complete bipartite graph with parts of cardinality $m$ and $n$ where $m \geq n$ are positive odd integers. Any edge-friendly labeling of such a complete bipartite graph has $|e(1)-e(0)| = 1$ and since every vertex has odd degree, every vertex must be labeled either $0$ or $1$, so $v(1)+v(0)=m+n$.

%%%%%%%%%%%%%%%%%%%%%%%%%%%%%%%%%%%%%%%%%%%%%%%%
\subsection{History and Motivation}

The idea of a balanced labelings was introduced in 1992 by Lee, Liu, and Tan~\cite{LLT1992}. In 1995, Kong and Lee provided results concerning edge-balanced graphs~\cite{KL1995}. In~\cite{KWL2009}, Kong, Wang, and Lee introduced the problem of finding the $EBI$ of complete bipartite graphs by solving the cases where $n=1$, $2$, $3$, $4$, and $5$, and also the case where $m=n$, but left the other cases open. In~\cite{KS2011}, Krop and Sikes expanded upon the work of Kong, Wang, and Lee by finding $EBI(K_{m,m-2a})$ for odd $m>5$ and $1 \leq a \leq \frac{m-3}{4}$.    

In this paper, we conclude the edge-balanced index set problem for complete bipartite graphs $K_{m,n}$ where $m \geq n$ are positive odd integers. In particular, we compute the maximal element of $EBI(K_{m,n})$ and provide an edge-friendly labeling that maximizes $|v(1)-v(0)|$. We then show that all smaller elements of the edge-balanced index set can be obtained.

%Attempts at determining the edge-balanced index sets for various families of graphs usually start with trying to find the maximal element in the set. Then, smaller terms are produced via some algorithm, one that often proceeds by exchanging the labels on pairs of edges with opposite labels (to maintain edge-friendliness) while increasing or decreasing the number of vertices of a particular label.  

%%%%%%%%%%%%%%%%%%%%%%%%%%%%%%%%%%%%%%%%%%%%%%%%
\section{Finding the maximal element of $EBI(K_{m,n})$}

%%%%%%%%%%%%%%%%%%%%%%%%%%%%%%%%%%%%%%%%%%%%%%%%%
%THE maxEBI THEOREM.
%%%%%%%%%%%%%%%%%%%%%%%%%%%%%%%%%%%%%%%%%%%%%%%%%

\begin{lem}
\label{lem:max-EBI}
Let $K_{m,n}$ be a complete bipartite graph with parts of cardinality $m$ and $n$, where $m \geq n$ are positive odd integers. Then the maximal element of $EBI(K_{m,n})$, denoted $\max EBI(K_{m,n})$, is
\begin{align}
\max EBI(K_{m,n})  = 
\begin{cases}
2, &\text{if}\; n=1, \\
m+n-2k-2, &\text{otherwise},
\end{cases}
\end{align}   
where $k = \left\lceil \frac{m-1}{n+1} \right\rceil$.  
\end{lem}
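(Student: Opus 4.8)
The plan is to establish the bound in two halves: first an upper bound on $|v(1)-v(0)|$ over all edge-friendly labelings, and then a matching construction. Throughout, write the parts as $X$ (size $m$) and $Y$ (size $n$), and note that $|E(K_{m,n})| = mn$ with $e(1)$ and $e(0)$ differing by exactly $1$, and every vertex labeled since all degrees are odd.

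For the upper bound (the $n>1$ case), the key observation is that making a vertex $v \in X$ a $0$-vertex is ``cheap'' — it requires at most $\lceil n/2 \rceil$ incident $0$-edges, i.e.\ a minority of its $n$ edges is enough — whereas making a vertex $v \in Y$ a $1$-vertex or $0$-vertex interacts globally because $\deg v = m$ is large. The extreme labeling we want is one where as many vertices as possible are $1$-vertices; the obstruction is the supply of $1$-edges, roughly $mn/2$. I would argue as follows. Suppose we try to make all $m$ vertices of $X$ into $1$-vertices and all $n$ vertices of $Y$ into $1$-vertices; this is impossible since it would need more than $mn/2$ $1$-edges. So some vertices must be $0$-vertices, and we count the minimum ``$1$-edge deficit'' forced. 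The cleanest route: show that if $t$ vertices of $Y$ are $1$-vertices, then the $1$-edges incident to $Y$ number at least $t\cdot\frac{m+1}{2}$, and the $1$-edges incident to $X$-vertices that are $1$-vertices use up the rest; a counting inequality in terms of $e(1) \le \lceil mn/2\rceil$ then forces at least $2k+2$ vertices (counted with the sign convention) to fail to be $1$-vertices, where $k = \lceil \frac{m-1}{n+1}\rceil$ emerges as $\big\lceil \frac{(m-1)/2}{(n+1)/2}\big\rceil$ from balancing the deficit $\frac{m-1}{2}$ against the per-$Y$-vertex surplus $\frac{n+1}{2}$. I would isolate this as the crucial inequality and verify the arithmetic of the ceiling carefully, since the exact value of $k$ is where the bookkeeping is delicate. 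The $n=1$ case is a separate short argument: $K_{m,1}$ is a star, $e(1) = \lceil m/2 \rceil$, the center is a $1$-vertex, and among the $m$ leaves exactly $e(1)$ are $1$-vertices and $e(0) = \lfloor m/2\rfloor$ are $0$-vertices, giving $|v(1)-v(0)| = |(\,\lceil m/2\rceil + 1) - \lfloor m/2\rfloor| = 2$.

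For the lower bound in the $n>1$ case, I would exhibit an explicit edge-friendly labeling attaining $m+n-2k-2$. The natural construction: designate $k+1$ vertices of $X$ to be ``sacrificial'' $0$-vertices and load almost all their incident edges with $0$'s (about $\lceil n/2\rceil$ each, enough to make them $0$-vertices), thereby concentrating the $1$-edges on the remaining $m-k-1$ vertices of $X$ and on all of $Y$, so that those $m-k-1$ vertices are $1$-vertices and all $n$ vertices of $Y$ are $1$-vertices, for a total of $v(1) = m+n-k-1$, $v(0) = k+1$, and $|v(1)-v(0)| = m+n-2k-2$. The work is in checking edge-friendliness: the total number of $0$-edges we are committing is roughly $(k+1)\lceil n/2\rceil$, which by the definition of $k$ is close to $mn/2$ but may be slightly too small or too large, so some edges must be reassigned between the sacrificial block and the rest; one then verifies the reassignment can be done without spoiling the vertex labels (each $0$-vertex in $X$ retains a strict $0$-majority, each $1$-vertex in $X$ and each vertex of $Y$ retains a strict $1$-majority). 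I expect this balancing of residues — getting $e(1)-e(0)=\pm 1$ exactly while preserving all the strict majorities — to be the main obstacle, and I would handle it by a case analysis on the parity/residue of $m-1 \pmod{n+1}$, which is precisely what the ceiling in $k$ records.

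Finally, I would remark that the case $n>1$ must also confirm the upper and lower bounds coincide, i.e.\ that the construction is optimal; this is immediate once both halves are in hand. It is worth double-checking the formula against the known boundary case $m=n$ from~\cite{KWL2009} and against $K_{m,3}, K_{m,5}$ from~\cite{KS2011} as a sanity check before committing to the constant $-2$ and the exact form of $k$.
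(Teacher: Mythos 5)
Your upper-bound half is essentially the paper's argument: double-count the roughly $\frac{mn+1}{2}$ available $1$-edges against the requirement that each $1$-vertex in the part of size $m$ carries at least $\frac{n+1}{2}$ of them and each $1$-vertex in the part of size $n$ carries at least $\frac{m+1}{2}$, and extract $k=\lceil\frac{m-1}{n+1}\rceil$ from the resulting deficit. The $n=1$ star computation also matches. The genuine gap is in your construction. You propose to take $k+1$ sacrificial $0$-vertices in the large part $X$ and make \emph{all} $n$ vertices of $Y$ into $1$-vertices. That is infeasible for $n\ge 3$: every $1$-edge is incident with exactly one vertex of $Y$, so if every $y\in Y$ has $\deg_1(y)\ge\frac{m+1}{2}$ then
\begin{align*}
e(1)=\sum_{y\in Y}\deg_1(y)\ \ge\ \frac{n(m+1)}{2}\ =\ \frac{mn+n}{2}\ >\ \frac{mn+1}{2},
\end{align*}
contradicting edge-friendliness. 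In fact your own counting inequality, applied from the $Y$ side, forces at least $\lceil\frac{n-1}{m+1}\rceil=1$ vertex of $Y$ to be a $0$-vertex whenever $n\ge 3$, so your proposed labeling contradicts your proposed upper bound. The correct extremal distribution, as in the paper, is $k$ $0$-vertices in the large part and exactly one $0$-vertex in the small part; the total $v(0)=k+1$ and the value $m+n-2k-2$ are the same, but the split between the parts is not optional.

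Repairing the construction is also more delicate than a residue adjustment on a sacrificial block: the paper devotes a separate section to it, distributing the $(n-1)\frac{m+1}{2}$ ``mandatory'' $1$-edges from the $n-1$ designated $1$-vertices of $Y$ cyclically (with wraparound) over the $m-k$ designated $1$-vertices of $X$ so that each receives at least $\frac{n+1}{2}$ of them, and then placing the remaining $\frac{m-n+2}{2}$ $1$-edges at the single $0$-vertex of $Y$ without promoting it (since $\frac{m-n+2}{2}<\frac{m+1}{2}$). Your plan of loading $0$-edges onto the sacrificial vertices and ``reassigning'' to fix the edge count does not by itself guarantee the even spread needed for every non-sacrificial vertex of $X$ to retain a strict $1$-majority, which is exactly where the work lies.
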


\begin{proof}
%%%%%%%%%%%%%%%%%%%%%%%%%%%%%%%%%%%%%%%%%%%%%%%%%
%EXPRESSING MAXEBI IN TERMS OF m,n,k,j
%%%%%%%%%%%%%%%%%%%%%%%%%%%%%%%%%%%%%%%%%%%%%%%%%

Let $K_{m,n}$ be a complete bipartite graph with parts $A$ and $B$ of cardinality $m \geq n$, respectively, where $m,n$ are positive odd integers. Consider an edge-friendly labeling of this graph such that $k$ vertices in part $A$ are labeled $0$ and $m-k$ vertices are labeled $1$, and $j$ vertices in part $B$ are labeled $0$ and $n-j$ vertices are labeled $1$. Without loss of generality, we may assume that $v(1) \geq v(0)$. Since all vertices are labeled either $0$ or $1$, the maximal element of $EBI(K_{m,n})$ is achieved when $k$ and $j$ are minimized, and in this case, $\max EBI(K_{m,n}) = v(1) - v(0) = (m-k)+(n-j)-(k+j) = m+n-2(k+j)$. 

%%%%%%%%%%%%%%%%%%%%%%%%%%%%%%%%%%%%%%%%%%%%%%%%%
%CHOOSING k AND j APPROPRIATELY
%%%%%%%%%%%%%%%%%%%%%%%%%%%%%%%%%%%%%%%%%%%%%%%%%

Since $K_{m,n}$ has $mn$ edges, and $mn$ is odd, any edge-friendly labeling will have either $e(0) = \frac{mn-1}{2}$ and $e(1) = \frac{mn+1}{2}$ or else $e(0) = \frac{mn+1}{2}$ and $e(1) = \frac{mn-1}{2}$; without loss of generality, we choose an edge-friendly labeling that has the former so that $e(1) - e(0) = 1$. In order to minimize $k$, we want the $0$-vertices in part $A$ to be incident with $0$-edges only; that is, we force the $1$-edges to be incident with the other $m-k$ vertices in this part. The number of $1$-edges, $e(1) = \frac{mn+1}{2}$, should be divided among these $m-k$ vertices, which means the average $1$-degree will be $\frac{e(1)}{m-k}$. For each of these $m-k$ vertices to be labeled $1$, this average $1$-degree must be greater than or equal to $\frac{n+1}{2}$. Then $k \geq \frac{m-1}{n+1}$, so we let $k = \left\lceil \frac{m-1}{n+1} \right\rceil$, which is greater than or equal to $1$ unless $m=n=1$, in which case $k=0$. Similarly, we want $n-j$ vertices in part $B$ to be labeled $1$, so the average $1$-degree, $\frac{e(1)}{n-j}$, must be greater than or equal to $\frac{m+1}{2}$. Then $j \geq \frac{n-1}{m+1}$, so we let $j = \left\lceil \frac{n-1}{m+1} \right\rceil$. Note that $j=0$ if and only if $n=1$ and $j = 1$ in all other cases. 

%%%%%%%%%%%%%%%%%%%%%%%%%%%%%%%%%%%%%%%%%%%%%%%%%
%MAXIMAL ELEMENT ARGUMENT.
%%%%%%%%%%%%%%%%%%%%%%%%%%%%%%%%%%%%%%%%%%%%%%%%%

Now consider a labeling where $k < \left\lceil \frac{m-1}{n+1} \right\rceil$, say $k = \left\lceil \frac{m-1}{n+1} \right\rceil - a$ for some positive integer $a$. Counting the $0$- and $1$-degrees of vertices in part $A$, we find that $e(0) \leq kn +(m-k)\frac{n-1}{2}$ and $e(1)\geq (m-k)\frac{n+1}{2}$, so that
\begin{align}
e(1)-e(0) &\geq m - \left( \left\lceil \frac{m-1}{n+1} \right\rceil -a \right) - \left( \left\lceil \frac{m-1}{n+1} \right\rceil - a \right) n \nonumber \\
	&= m - (n+1)\left\lceil \frac{m-1}{n+1} \right\rceil + a(n+1) \label{eqn:bad-k}
\end{align}
Using the division algorithm, we write $m-1 = (n+1)q+r$ where $0 \leq r < n+1$.
If $r=0$, then $(n+1) \left\lceil \frac{m-1}{n+1} \right\rceil = m-1$, and~\eqref{eqn:bad-k} implies that $e(1)-e(0) \geq 1 + a(n+1) > 1$, since $a$ and $n$ are positive integers. Similarly, if $0 < r < n+1$, then $(n+1) \left\lceil \frac{m-1}{n+1} \right\rceil = m+n-r$, and~\eqref{eqn:bad-k} implies $e(1)-e(0) \geq a+r+n(a-1) > 1$. In either case, the labeling is not edge-friendly. A similar argument applies to a labeling where $j$ is chosen as $0$ instead of $1$ when $n \geq 3$.

%%%%%%%%%%%%%%%%%%%%%%%%%%%%%%%%%%%%%%%%%%%%%%%%%
%CONCLUSION
%%%%%%%%%%%%%%%%%%%%%%%%%%%%%%%%%%%%%%%%%%%%%%%%%

Therefore, if $m$ is a positive odd integer and $n=1$, then $m-1$ is even, $k = \left\lceil \frac{m-1}{n+1} \right\rceil = \frac{m-1}{2}$, and $j=0$, so that the maximal element of $EBI(K_{m,1})$ is $m+n-2(k+j) = 2$. Moreover, for odd integers $m \geq n \geq 3$, we have that $j=1$ and $\max EBI(K_{m,n}) = m+n-2(k+1)$, where $k = \left\lceil \frac{m-1}{n+1} \right\rceil$. \hfill $\Box$
\end{proof}

Note that $\max EBI(K_{m,n})$ is an even integer for positive odd integers $m \geq n$. 

%%%%%%%%%%%%%%%%%%%%%%%%%%%%%%%%%%%%%%%%%%%%%%%%%
\section{The $\max EBI(K_{m,n}) $ labeling}
\label{sec:max-EBI}

%%%%%%%%%%%%%%%%%%%%%%%%%%%%%%%%%%%%%%%%%%%%%%%%%
%SETUP.
%%%%%%%%%%%%%%%%%%%%%%%%%%%%%%%%%%%%%%%%%%%%%%%%%

Consider a complete bipartite graph $K_{m,n}$ with parts $A$ and $B$ of order $m $ and $n$, respectively, where $m \geq n$ are positive odd integers. Let $k = \left\lceil \frac{m-1}{n+1} \right\rceil$ and $j = \left\lceil \frac{n-1}{m+1} \right\rceil$. Name the vertices in part $A$ as $\{ v_1, \dots, v_{m} \}$ and those in part $B$ as $\{ u_1, \dots, u_{n}\}$. The goal is to provide an edge-friendly labeling such that the vertices $\{ v_1, \dots, v_{k} \}$ and $u_1$ will be $0$-vertices while $\{ v_{k+1}, \dots, v_{m} \}$ and $\{ u_2, \dots, u_{n} \}$ will be $1$-vertices.

%%%%%%%%%%%%%%%%%%%%%%%%%%%%%%%%%%%%%%%%%%%%%%%%%
%CASE n=1
%%%%%%%%%%%%%%%%%%%%%%%%%%%%%%%%%%%%%%%%%%%%%%%%%

In the case where $n=1$, we have $k = \frac{m-1}{2}$ and $j=0$, so for $1 \leq i \leq \frac{m-1}{2}$, we label the edge $u_1v_i$ by~$0$, and for $\frac{m+1}{2} \leq i \leq m$, we label edge $u_1v_i$ by~$1$. Then $e(0) = \frac{m-1}{2}$, $e(1) = \frac{m+1}{2}$, and the labeling is edge-friendly since $e(1)-e(0) = 1$. Moreover, because $\deg_1(u_1) = \frac{m+1}{2}$ implies $u_1$ is a $1$-vertex along with vertices $v_i$ for $\frac{m+1}{2} \leq i \leq m$, we have that $v(0) = e(0)$ and $v(1) = e(1)+1$, which implies $v(1)-v(0) = 2$.

%%%%%%%%%%%%%%%%%%%%%%%%%%%%%%%%%%%%%%%%%%%%%%%%%
%CASE n>=3
%%%%%%%%%%%%%%%%%%%%%%%%%%%%%%%%%%%%%%%%%%%%%%%%% 

For odd $n \geq 3$, we have that $k = \left\lceil \frac{m-1}{n+1} \right\rceil$ and $j=1$. For each integer $2 \leq i \leq n$ and for each integer $1 \leq i' \leq \frac{m+1}{2}$, we label edge $u_iv_{s(i,i')}$ by $1$, where 
\begin{align*}
s(i,i') = \left( \left[ (i-2)\frac{m+1}{2}+ i'-1 \right] \!\!\!\!\!\! \mod{(m-k)} \right) + k+1.
\end{align*}
This function counts through the integers $\{ k+1, k+2, \dots m \}$ consecutively with wraparound, distributing $1$-edges as uniformly as possible among the vertices $\{ v_{k+1}, \dots, v_{m} \}$ in part $A$ and $\{ u_2, \dots, u_{n} \}$ in part $B$. At this point, we have labeled $(n-1)\frac{m+1}{2}$ edges by $1$. Since $e(1) = \frac{mn+1}{2}$, there are still $e(1) - (n-1)\frac{m+1}{2} = \frac{m-n+2}{2} \geq 1$ edges that need to be labeled~$1$ to obtain an edge-friendly labeling. So, for each integer $1 \leq i' \leq \frac{m-n+2}{2}$, we label edge $u_1v_{s(1,i')}$ by~$1$, where 
\begin{align*}
s(1,i') = \left( \left[ s \left( n, \frac{m+1}{2} \right) - k + i' - 1 \right] \!\!\!\!\!\! \mod{(m-k)} \right) + k+1.
\end{align*}
Label all the remaining edges in $K_{m,n}$ by $0$. 

%%%%%%%%%%%%%%%%%%%%%%%%%%%%%%%%%%%%%%%%%%%%%%%%%
%COUNT TO VERIFY EDGE-FRIENDLINESS
%AND THAT maxEBI WAS ACHIEVED.
%%%%%%%%%%%%%%%%%%%%%%%%%%%%%%%%%%%%%%%%%%%%%%%%%

Counting the number of $1$-edges incident with each vertex $v_i$, where $k+1 \leq i \leq m$, we find that $\deg_1(v_i) \geq \frac{n+1}{2}$, which implies that each vertex in $\{ v_{k+1}, \dots, v_m \}$ is a $1$-vertex. Since $\deg_1(u_i) = \frac{m+1}{2}$, where $2 \leq i \leq n$, each vertex in $\{ u_2, \dots, u_{n} \}$ is a $1$-vertex as well. Since $\deg_1(u_1) = \frac{m-n+2}{2} < \frac{m+1}{2}$, vertex $u_1$ will be a $0$-vertex. Likewise, each of the vertices $\{ v_1, \dots, v_k \}$ is incident with $0$-edges only, so these vertices are $0$-vertices. Thus, for this edge-labeling, we have the following counts (by construction): $e(1) = \frac{m-n+2}{2} + (n-1)\frac{m+1}{2} = \frac{mn+1}{2}$, $e(0) = mn - e(1) = \frac{mn-1}{2}$, $v(1) = (m-k) + (n-1) = m+n-(k+1)$, and $v(0) = k+1$.  Then $e(1) - e(0) = 1$ and the labeling is edge-friendly. Moreover, under this labeling, $v(1)-v(0) = m+n-2(k+1) \in EBI(K_{m,n})$, and by Lemma~\ref{lem:max-EBI}, $\max EBI(K_{m,n}) = m+n-2(k+1)$. Thus, we have an edge-friendly labeling of $K_{m,n}$ for odd integers $m \geq n \geq 3$ that attains the maximal value in $EBI(K_{m,n})$.

%%%%%%%%%%%%%%%%%%%%%%%%%%%%%%%%%%%%%%%%%%%%%%
\section{Smaller indices}

%%%%%%%%%%%%%%%%%%%%%%%%%%%%%%%%%%%%%%%%%%%%%%%%%
%SMALLER INDICES MUST BE EVEN.
%%%%%%%%%%%%%%%%%%%%%%%%%%%%%%%%%%%%%%%%%%%%%%%%%

The following observation, taken from~\cite{KWL2009}, is helpful in determining elements in the edge-balanced index set for graphs that have all vertices of odd degree: If $G$ is a graph whose vertices all have odd degree, then the edge-balanced index set of $G$ contains only even integers. Thus, when searching for terms smaller than $\max EBI(K_{m,n})$ in the edge-balanced index set for complete bipartite graphs with both parts of odd order, we need only produce (show there exists) an edge-friendly labeling for which the quantity $|v(1) - v(0)|$ is even and falls between $0$ and $\max EBI(K_{m,n}) - 2$, inclusive, if the quantity is realizable.

%%%%%%%%%%%%%%%%%%%%%%%%%%%%%%%%%%%%%%%%%%%%%%%%%
%``EDGE-SWITCHING'' ARGUMENT.
%%%%%%%%%%%%%%%%%%%%%%%%%%%%%%%%%%%%%%%%%%%%%%%%%

\begin{thm}
\label{thm:EBI}
Let $K_{m,n}$ be a complete bipartite graph with parts of cardinality $m$ and $n$, where $m \geq n$ are positive odd integers. Then
\begin{align}
EBI(K_{m,n}) = 
\begin{cases}
\{ 2 \}, &\text{if}\; n=1, \\
\{ 0, 2, \dots, \max EBI(K_{m,n}) \}, &\text{otherwise},
\end{cases}
\end{align}   
where $\max EBI(K_{m,n})$ is given by Lemma~\ref{lem:max-EBI}. 
\end{thm}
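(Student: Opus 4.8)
The plan is to establish the theorem for $n \geq 3$ by exhibiting, for each even integer $t$ with $0 \leq t \leq \max EBI(K_{m,n}) - 2$, an edge-friendly labeling of $K_{m,n}$ with $|v(1)-v(0)| = t$; the case $n=1$ already follows from Lemma~\ref{lem:max-EBI} together with the explicit labeling given in Section~\ref{sec:max-EBI}, since $EBI(K_{m,1})$ has maximum $2$ and contains only even integers, leaving $\{0,2\}$ as the only candidates, and $0$ is impossible because $m+1$ is even so $v(1)+v(0)=m+1$ is even while $v(1)=v(0)$ would force $e(1)=e(0)$. The natural strategy for $n \geq 3$ is to start from the $\max EBI$ labeling constructed in Section~\ref{sec:max-EBI} and modify it incrementally: I would describe a local "swap" operation on the edge labels — toggling a carefully chosen pair consisting of one $1$-edge and one $0$-edge — that preserves edge-friendliness (it keeps $e(1)-e(0)=1$) while changing $v(1)-v(0)$ by a controlled amount. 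Because every vertex has odd degree, flipping a single incident edge can change that vertex's label; the key is to show we can always choose a swap that changes $|v(1)-v(0)|$ by exactly $2$, so that we can walk down from $\max EBI(K_{m,n})$ to $0$ in steps of $2$.

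Concretely, I would proceed as follows. First, fix the $\max EBI$ labeling, where $v_1,\dots,v_k$ and $u_1$ are the $0$-vertices and the rest are $1$-vertices. Second, identify a $1$-vertex $v$ in part $A$ (say $v_m$) that we wish to convert to a $0$-vertex: since $\deg_1(v_m) \geq \frac{n+1}{2}$ with equality possible, I would relabel enough of its incident $1$-edges as $0$-edges to push $\deg_1(v_m)$ below $\frac{n-1}{2}$, and simultaneously relabel an equal number of $0$-edges elsewhere as $1$-edges to restore $e(1)-e(0)=1$. The cleanest bookkeeping is to move those $1$-edges onto a vertex that is already a "strong" $1$-vertex (one whose $1$-degree is strictly larger than needed), or onto a vertex already committed to being $0$ in a way that does not flip its label, so that the net effect is to decrease $v(1)$ by $1$ and increase $v(0)$ by $1$, i.e.\ decrease $v(1)-v(0)$ by $2$. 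Third, I would iterate this, converting $1$-vertices to $0$-vertices one at a time; after converting $\ell$ of them we have $v(1)-v(0) = m+n-2(k+1) - 2\ell$, and letting $\ell$ range from $0$ up to $\frac{m+n-2(k+1)}{2}$ gives every even value from $\max EBI(K_{m,n})$ down to $0$. A small parity/feasibility check is needed at the bottom end to confirm $v(1)-v(0)=0$ is actually attainable, i.e.\ that $m+n$ even permits a labeling with $v(1)=v(0)=\frac{m+n}{2}$; since $m+n$ is even this is consistent with $e(1) \ne e(0)$, and one just needs to verify the degree constraints can be met, which is a counting argument analogous to the one in Lemma~\ref{lem:max-EBI}.

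The main obstacle I anticipate is the bookkeeping that guarantees each swap changes $|v(1)-v(0)|$ by \emph{exactly} $2$ rather than by $0$ or $4$: when I relabel an edge $u_i v_j$, I change the $1$-degrees of \emph{both} $u_i$ and $v_j$, so I must ensure the "source" vertices of the reassigned $1$-edges have enough slack that their labels don't flip, and that the "target" vertex $v_m$ actually crosses the threshold. Because all degrees here are odd and moderately large (for $n \geq 3$ and $m \geq n$, each part has vertices of degree $\geq 3$), there is enough room to maintain a pool of "flexible" edges, but making this precise requires tracking the $1$-degrees of a few distinguished vertices through the whole process — this is where the real work lies. A secondary subtlety is the genuinely small cases (e.g.\ $n=3$ with $m$ close to $n$, or $m=n$), where $k$ is small and the slack is tight; these may need to be checked directly or handled by a slightly different explicit family of labelings, and I would isolate them as separate cases rather than force them into the general argument.
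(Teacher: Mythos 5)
There is a genuine gap here, and you have correctly located it yourself: the whole argument hinges on guaranteeing that each modification changes $|v(1)-v(0)|$ by \emph{exactly} $2$, and your proposal leaves that as acknowledged ``real work'' rather than proving it. Your scheme --- stripping $1$-edges off a chosen $1$-vertex $v_m$ and re-depositing an equal number of $1$-labels on $0$-edges elsewhere --- disturbs the $1$-degrees of vertices in part $B$ at both endpoints of every relabeled edge, so you would have to simultaneously control the labels of several $B$-vertices as well; you give no mechanism for doing this, and in tight cases there may be no pool of ``flexible'' edges at all (for $m=n$ the maximal labeling has total slack of one single edge: almost every $1$-vertex in $A$ has $\deg_1$ exactly $\frac{n+1}{2}$). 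The paper closes the gap with two ideas absent from your proposal. First, the local move is a swap of a $0$-edge $xz$ and a $1$-edge $yz$ sharing a common endpoint $z$ (such a $z$ exists by pigeonhole because $\deg_0(x),\deg_1(y) \geq \frac{n+1}{2}$); this leaves $\deg_1(z)$ unchanged, so only the two vertices $x$ and $y$ of part $A$ can change label, and the index moves by $0$ or $-2$, never by $-4$. Second, a global counting argument using edge-friendliness shows that in \emph{any} edge-friendly labeling with $v(1)>v(0)$ and $n \geq 3$, every $0$-vertex $x$ and $1$-vertex $y$ in the same part satisfy $\deg_0(x) > \deg_1(y)$; hence, iterating the swap on the pair $x,y$, vertex $y$ crosses its threshold and becomes a $0$-vertex strictly before $x$ becomes a $1$-vertex, so the first label change decreases $v(1)-v(0)$ by exactly $2$. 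Because this holds for an arbitrary edge-friendly labeling rather than only the maximal one, the step can be repeated until $v(1)=v(0)$, which also disposes of your separate ``feasibility check at the bottom end.''

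Your treatment of $n=1$ reaches the right conclusion, but the claim that $v(1)=v(0)$ would force $e(1)=e(0)$ should be justified: in $K_{m,1}$ the leaves inherit the labels of their edges and $u_1$ takes the label of the majority edge class, so $v(1)-v(0) = \big(e(1)-e(0)\big) + \varepsilon$ where $\varepsilon = \pm 1$ has the same sign as $e(1)-e(0)$, giving $|v(1)-v(0)|=2$ for every edge-friendly labeling.
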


\begin{proof}
We consider an edge-friendly labeling of such a complete bipartite graph $K_{m,n}$ as given the proof of Lemma~\ref{lem:max-EBI}, labeled so that $v(1)-v(0)=\max EBI(K_{m,n})$ as described in Section~\ref{sec:max-EBI}. If $n=1$, then every edge-friendly labeling of $K_{m,1}$ has $|v(1)-v(0)| = \max EBI(K_{m,n}) = 2$, so $EBI(K_{m,1}) = \{ 2 \}$.

For odd $n \geq 3$, we assume $v(1) > v(0)$; otherwise, $v(1)=v(0)$ and we have that $0 \in EBI(K_{m,n})$. The edge-friendly condition requires that parts $A$ and $B$ both contain $0$-vertices and $1$-vertices. Without loss of generality, choose vertices $x,y \in A$, where $x$ is a $0$-vertex and $y$ is a $1$-vertex, and note that for any such choice, we have $\deg_0(x),\deg_1(y) \geq \frac{n+1}{2}$. By the pigeonhole principle, vertices $x$ and $y$ are adjacent to at least one common neighbor $z$ by $0$-edge $xz$ and $1$-edge $yz$. Switching the labels on $xz$ and $yz$ either preserves or decreases by $2$ the quantity $v(1)-v(0)$. Moreover, edge-friendliness implies that 
\begin{align}
\sum_{v \in V} \deg_1(v) - \sum_{v \in V} \deg_0(v) = 2. \label{eqn:e-f}
\end{align}

We claim that for all such choices of vertices $x,y \in A$, $\deg_0(x) > \deg_1(y)$. Assume the contrary: for any $0$-vertex $x$ and $1$-vertex $y$ in part $A$, suppose that 
\begin{align}
\deg_0(x) \leq \deg_1(y). \label{eqn:star}
\end{align}
Let $V_0$ and $V_1$ represent the sets of $0$- and $1$-vertices, respectively. Then inequality~\eqref{eqn:star} implies that
\begin{align}
\sum_{v \in V_1} \deg_1(v) - \sum_{v \in V_0} \deg_0(v) \geq \big( v(1) - v(0) \big) \frac{n+1}{2}. \label{eqn:1}
\end{align} 
Note that the right-hand side of~\eqref{eqn:1} is the product of the number of additional $1$-vertices (since $v(1)>v(0)$) and the minimum of their $1$-degrees. Since 
\begin{align}
\sum_{v \in V_1} \deg(v) - \sum_{v \in V_0} \deg(v) = \big( v(1) - v(0) \big) n, \label{eqn:2}
\end{align}  
from~\eqref{eqn:1} it follows that
\begin{align}
\sum_{v \in V_1} \deg_0(v) - \sum_{v \in V_0} \deg_1(v) \leq \big( v(1) - v(0) \big) \frac{n-1}{2}. \label{eqn:3}
\end{align} 
Expanding the sums in~\eqref{eqn:e-f} gives the following:
\begin{align}
2 &= \sum_{v \in V_1} \deg_1(v) + \sum_{v \in V_0} \deg_1(v) - \sum_{v \in V_1} \deg_0(v) - \sum_{v \in V_0} \deg_0(v) \nonumber \\
	&\geq \sum_{v \in V_1} \deg_1(v) - \sum_{v \in V_0} \deg_0(v) + \big( v(1) - v(0) \big) \frac{n-1}{2}, \label{eq:rearrange}
\end{align}
by~\eqref{eqn:3}. Rearranging~\eqref{eq:rearrange} gives
\begin{align*}
\sum_{v \in V_1} \deg_1(v) - \sum_{v \in V_0} \deg_0(v) &\leq 2 - \big( v(1) - v(0) \big) \frac{n-1}{2},
\end{align*}
which implies by~\eqref{eqn:1} that
\begin{align*}
\big( v(1) - v(0) \big) \frac{n+1}{2} \leq 2 - \big( v(1) - v(0) \big) \frac{n-1}{2},
\end{align*}
or equivalently, $\big( v(1) - v(0) \big) n \leq 2$. This is a contradiction since $v(1)>v(0)$ and $n \geq 3$. Thus, for all $0$-vertices $x$ and $1$-vertices $y$ in $A$, $\deg_0(x) > \deg_1(y)$.

We conclude that if we switch the labels on $0$-edge $xz$ and $1$-edge $yz$, where $z$ is a common neighbor to $x$ and $y$, the label on vertex $z$ will not change, but vertex $y$ will change from being a $1$-vertex to a $0$-vertex before vertex $x$ changes from being a $0$-vertex to a $1$-vertex. At the moment vertex $y$ changes its label from $1$ to $0$, the quantity $v(1)-v(0)$ is decreased by $2$. We may repeat the process on the new edge-friendly labeling, continually decreasing the edge-balanced index by $2$, until $v(1)=v(0)$, at which time we are done. \hfill $\Box$   
\end{proof}

%%%%%%%%%%%%%%%%%%%%%%%%%%%%%%%%%%%%%%%%%%%%%


\begin{thebibliography}{99}
	\bibitem{GallianYYYY} J.A.~Gallian, A dynamic survey of graph labeling, \textit{Electron. J. Combin. 5}, \textbf{DS6} (2012), \texttt{http://www.combinatorics.org/ojs/index.php/eljc/index}.
 
	\bibitem{KL1995} M.C.~Kong and S.-M.~Lee, On edge-balanced graphs, \textit{Graph Theory, Comb. and Alg.}, \textbf{1} (1995), 711-722.
 
 	\bibitem{KWL2009} M.C.~Kong, Y.-C.~Wang, and S.-M.~Lee, On edge-balanced index sets of some complete $k$-partite graphs, In \textit{Proceedings of the Fortieth Southeastern International Conference on Combinatorics, Graph Theory and Computing}, \textbf{196} (2009), pp. 71-94.

	\bibitem{KS2011} E.~Krop and K.~Sikes, On the edge-balanced index sets of complete bipartite graphs, In \textit{Proceedings of the Forty-Second Southeastern International Conference on Combinatorics, Graph Theory and Computing}, \textbf{207} (2011), pp. 23-32.
 
	\bibitem{LLT1992} S.-M.~Lee, A.~Liu, and S.K.~Tan, On balanced graphs, \textit{Cong. Numer.}, \textbf{87} (1992), pp. 59-64. 
\end{thebibliography}
\end{document}